\documentclass{amsart}

\usepackage[left=3cm,top=3cm,right=3cm,bottom=3cm]{geometry}
\usepackage{amssymb, amsmath, amsthm}
\usepackage{verbatim}
\usepackage{comment}
\usepackage{graphicx}
\usepackage{enumerate}
\usepackage{color}

\DeclareGraphicsExtensions{.pdf,.jpg,.png}

\theoremstyle{plain}
\newtheorem{theorem}{Theorem}

\newtheorem{claim}[theorem]{Claim}

\theoremstyle{definition}
\newtheorem{definition}[theorem]{Definition}

\newcommand{\Z}{\mathbb{Z}}

\makeatletter

\pagestyle{plain}

\begin{document}

\title{Integral and rational graphs in the plane}

\author{J\'{o}zsef Solymosi}
\address{\noindent Department of Mathematics, 
University of British Columbia, 1984 Mathematics Road, 
Vancouver, BC, V6T 1Z2, Canada and Obuda University, H-1034 Budapest, Hungary }
\email{solymosi@math.ubc.ca}

\maketitle

\begin{abstract}
We describe constructions of infinite graphs which are not representable as integral graphs in the plane, addressing
a question of Erd\H{o}s. We also mention some related problems.
\end{abstract}

\section{Introduction}

Anning and Erd\H{o}s proved in \cite{AE} that if an infinite point set in the plane has integer pairwise distances only, then the points are collinear. 
Shortly after their result, Erd\H{o}s found a simple and elegant proof \cite{Er}. This proof - which we will sketch in the first section - was the base of further results about integral point sets\footnote{a set of points is an {\em integral point set} if all pairwise distances are integers}.

We are going to analyze integral (rational) distance graphs. These are geometric graphs where the vertices are points in the plane, no three on a line and two are connected by an edge if and only if their distance is an integer (rational) number. Erd\H{o}s asked if there is a way to characterize integral graphs. In the same paper, \cite{ErKomal}, he noted that his proof implies that such graphs contain no $K(3,\aleph_0)$, i.e. three vertices $X_1, X_2, X_3$ and infinitely many vertices $Y_1, Y_2, \ldots$ such that $X_i$ and $Y_j$ are connected whenever $1\leq i\leq 3$ and $1\leq j.$ He asked if graphs without $K(3,\aleph_0)$ are integral graphs. Maehara, Ote and Tokushige proved that every finite graph is an integer graph \cite{MOT}. This leaves the question, what infinite graphs can be realized as integral graphs? We will construct families of graphs which can not be subgraphs of integral graphs. In light of these examples, the complete characterization of integral graphs seems difficult. 

Even less is known about rational graphs. One can choose a dense subset of the unit circle such that all pairwise distances are rational, so $K_{\aleph_0}$ is a rational graph. Maehara had a nice construction showing that the infinite complete bipartite graph, $K(\aleph_0,\aleph_0),$ is rational \cite{Ma}. It is not known if every finite graph is a rational graph or not. We will show that a major conjecture in arithmetic geometry, the Bombieri-Lang conjecture \cite{HS}, would imply that there are simple graphs which are not rational.

\section{Non-integral graphs}

We are going to show three different constructions for non-integral graphs. The first example is based on the observation that in any integral graph, the common neighbours of any two vertices span a subgraph with a finite chromatic number. The second graph has a very simple structure; every vertex has a finite degree, and the chromatic number of the graph is six. The third one is a combination of the previous two. While it has an unbounded chromatic number, its vertices have finite degrees. All three constructions are built on the ``hyperbola method'' originated from Erd\H{o}s. Before the constructions, let's review Erd\H{o}s' proof for the Anning-Erd\H{o}s theorem from \cite{Er}.

\begin{theorem}[Anning and Erd\H{o}s] 
If $P$ is an infinite point set in the plane, such that the distances are integers, $\overline{AB} \in \Z$ for any $A,B\in P$, then $P$ is a subset of a line.     
\end{theorem}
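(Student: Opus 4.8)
The plan is to argue by contradiction. Suppose $P$ is infinite but is \emph{not} contained in a single line; then there exist three non-collinear points $A, B, C \in P$. I will show that this forces $P$ to be finite, contradicting the hypothesis. The whole argument rests on the ``hyperbola method'': confining every point of $P$ to finitely many conics determined by a fixed pair of foci.

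First I would fix the pair $A, B$ and set $d = \overline{AB}$. For an arbitrary point $X \in P$, the triangle inequality gives $\abs{\overline{XA} - \overline{XB}} \leq \overline{AB} = d$, and since both $\overline{XA}$ and $\overline{XB}$ lie in $\Z$, their difference is an integer in $\{-d, -d+1, \ldots, d\}$. Hence $X$ lies on one of at most $2d+1$ curves of the form $\{Y : \overline{YA} - \overline{YB} = k\}$ with $k \in \Z$. For $0 < \abs{k} < d$ each such curve is one branch of a hyperbola with foci $A, B$; the value $k = 0$ gives the perpendicular bisector of $AB$, and $\abs{k} = d$ gives a ray on the line through $A$ and $B$. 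In every case the locus is contained in a (possibly degenerate) conic depending only on $A$ and $B$.

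Next I would run the identical argument with the pair $A, C$, so that $X$ simultaneously lies on one of at most $2\,\overline{AC}+1$ conics with foci $A$ and $C$. The decisive point is that a curve from the $(A,B)$-family and a curve from the $(A,C)$-family share no common component: this is exactly where non-collinearity of $A, B, C$ is used, since two hyperbolas (or degenerate conics) sharing a branch would force their foci to be collinear. Two conics with no common component meet in at most four points by B\'ezout. Therefore $X$ is pinned to one of at most $(2\,\overline{AB}+1)(2\,\overline{AC}+1)\cdot 4$ positions in the plane. As $X$ ranged over all of $P$, this bounds $\size{P}$ by a finite number, the desired contradiction.

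The main obstacle is the geometric bookkeeping rather than any deep idea: I must verify carefully that each locus $\{Y : \overline{YA} - \overline{YB} = k\}$ really is contained in a conic, treat the degenerate values $k = 0$ and $\abs{k} = d$ separately, and confirm that distinct foci prevent two curves from the two families from coinciding. Once this finiteness of pairwise intersections is in hand, the counting step is immediate and the theorem follows.
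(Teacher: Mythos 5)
Your argument is correct and is essentially the paper's own proof: you confine each point of $P$ to one of finitely many hyperbolas with foci $A,B$ and one of finitely many with foci $A,C$, then bound $\size{P}$ by the finite number of pairwise intersections. Your treatment is slightly more careful than the paper's (handling the degenerate loci $k=0$ and $\abs{k}=d$ and justifying via B\'ezout that the two families share no component), but the underlying idea is identical.
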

\begin{proof}
Let $A,B,C\in P$ be three non-collinear points. If $X\in P$ then $|\overline{AX}-\overline{BX}|$ is an integer between $0$ and $\overline{AB}$.
Then $X$ lies on one of the $\overline{AB}+1$ hyperbolas, defined by $|\overline{AX}-\overline{BX}|=D$ where $D\in \{0,\ldots,\overline{AB}\}$.
Similarly, $X$ lies on one of the $\overline{AC}+1$ hyperbolas, defined by $|\overline{AX}-\overline{CX}|=H$ where $H\in \{0,\ldots,\overline{AC}\}$.
The two sets of hyperbolas intersect in at most $F(A,B,C)=4(\overline{AB}+1)(\overline{AC}+1)$ points, so $|P|\leq F(A,B,C),$ a finite number.
\end{proof}

Note that we didn't use the assumption that $\overline{AB}$ and $\overline{AC}$ are integers. So, as Erd\H{o}s observed in \cite{ErKomal}, integral graphs do not contain subgraphs isomorphic to $K(3,\aleph_0).$ This was also noted by Maehara in \cite{Ma}, where he made further observations about integral and rational graphs.

\medskip
The following examples show that there are graphs free of copies of $K(3,\aleph_0)$ but not integral graphs.

\subsection{The first family of non-integral graphs}
In the first construction, we are using geometric properties of hyperbolas. The canonical form of a hyperbola is given by the equation

\[
\frac{x^2}{a^2}-\frac{y^2}{b^2}=1.
\]

The points of a hyperbola can be partitioned into four arcs based on the signs of their $x,y$ coordinates. Two points of the hyperbola, $P=(x_1,y_1)$ and $Q=(x_2,y_2)$ are in the same partition class if $sign(x_1)=sign(x_2)$ and $sign(y_1)=sign(y_2),$ i.e. the two points are in the same quadrant of the real plane.

\begin{claim}\label{cross}
If two points, $P$ and $Q$, of a hyperbola are on the same arc, then the line through them intersects the interval $[-a,a].$ (see Fig. \ref{cross_pic}).
\end{claim}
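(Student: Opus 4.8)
The plan is to exploit the symmetries of the hyperbola to reduce to a single arc and then compute the $x$-intercept of the chord directly. The curve $\frac{x^2}{a^2}-\frac{y^2}{b^2}=1$ is invariant under $(x,y)\mapsto(-x,y)$ and $(x,y)\mapsto(x,-y)$, and both reflections map the segment $[-a,a]$ of the $x$-axis to itself; hence it suffices to treat one arc, say the one with $x\geq a$ and $y\geq 0$. First I would parametrize this arc by $x=a\cosh t$, $y=b\sinh t$ with $t\geq 0$, so that $P$ and $Q$ correspond to parameters $0\leq t_1<t_2$. Since $\sinh$ is strictly increasing, $P$ and $Q$ have distinct $y$-coordinates, so the line through them is not horizontal and really does meet the $x$-axis in a single point; this disposes of the only degenerate case.

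Next I would write down the $x$-intercept of the chord. Setting $y=0$ in the two-point form gives $x_0=x_1-y_1\frac{x_2-x_1}{y_2-y_1}$. Substituting the parametrization and using the sum-to-product identities $\cosh t_2-\cosh t_1=2\sinh\frac{t_1+t_2}{2}\sinh\frac{t_2-t_1}{2}$ and $\sinh t_2-\sinh t_1=2\cosh\frac{t_1+t_2}{2}\sinh\frac{t_2-t_1}{2}$, the factor $\frac{x_2-x_1}{y_2-y_1}$ collapses to $\frac{a}{b}\tanh\frac{t_1+t_2}{2}$. After one application of the addition formula for $\cosh$, the whole expression simplifies to the clean closed form
\[
x_0=a\,\frac{\cosh\frac{t_2-t_1}{2}}{\cosh\frac{t_1+t_2}{2}}.
\]
I expect recognizing this closed form to be the heart of the argument; once it is in hand the bound is immediate.

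Finally I would read off the conclusion. Both arguments of $\cosh$ are nonnegative, and $\frac{t_2-t_1}{2}\leq\frac{t_1+t_2}{2}$ since $t_1\geq 0$; as $\cosh$ is increasing on $[0,\infty)$ this forces $0<x_0\leq a$, so in particular $x_0\in[-a,a]$, and the reflections then give the same conclusion on the remaining three arcs. The main obstacle, such as it is, is the lower bound: the upper bound $x_0\leq a$ follows softly from convexity of the arc (writing $x$ as a convex function of $y$ and noting that $y=0$ is an extrapolation to the left of the chord, where a convex graph lies above its secant), but the positivity $x_0>0$ really uses that $P$ and $Q$ lie on the \emph{same} arc — i.e.\ that $t_1$ and $t_2$ have the same sign — which is exactly the hypothesis of the claim and the place where the computation would fail for points on opposite arcs.
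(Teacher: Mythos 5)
Your proof is correct, but it takes a genuinely different route from the paper. The paper's argument is soft and topological: the hyperbola separates the plane into three regions, and since both intersection points of the line with the hyperbola lie on a single arc, the chord must cross the middle region and hence the segment $F_1F_2$ on the $x$-axis. Your argument instead reduces to one arc by the two reflection symmetries and computes the $x$-intercept explicitly via the parametrization $x=a\cosh t$, $y=b\sinh t$, arriving at the closed form $x_0=a\,\cosh\frac{t_2-t_1}{2}\big/\cosh\frac{t_1+t_2}{2}$; I have checked the sum-to-product and addition-formula manipulations and they are right, and the monotonicity of $\cosh$ on $[0,\infty)$ together with $t_1\geq 0$ does give $0<x_0\leq a$. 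What your approach buys is rigor and precision: the paper's one-line proof does not really justify why the crossing point lands inside the focal segment, and in fact it only addresses the interval $[-c,c]$ between the foci rather than the stated $[-a,a]$ (the former suffices for the application in Claim~\ref{neighbours}, but the latter is what the claim asserts); your computation proves the stronger statement, pinning the intercept to $(0,a]$ on the right branch, and it also explicitly handles the degenerate horizontal-chord case that the soft argument glosses over. What the paper's approach buys is brevity and a picture-level intuition that generalizes to other separating curves without any coordinate work.
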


\begin{proof}
    The hyperbola divides the plane into three connected regions—two, where the two focus points lie and a ``middle'' region in between them. The $PQ$ line has both intersection points with the hyperbola in one quarter, so it will intersect the $F_1F_2$ interval in the middle region.

\begin{figure}[h]

\centering
\includegraphics[width=0.6\textwidth]{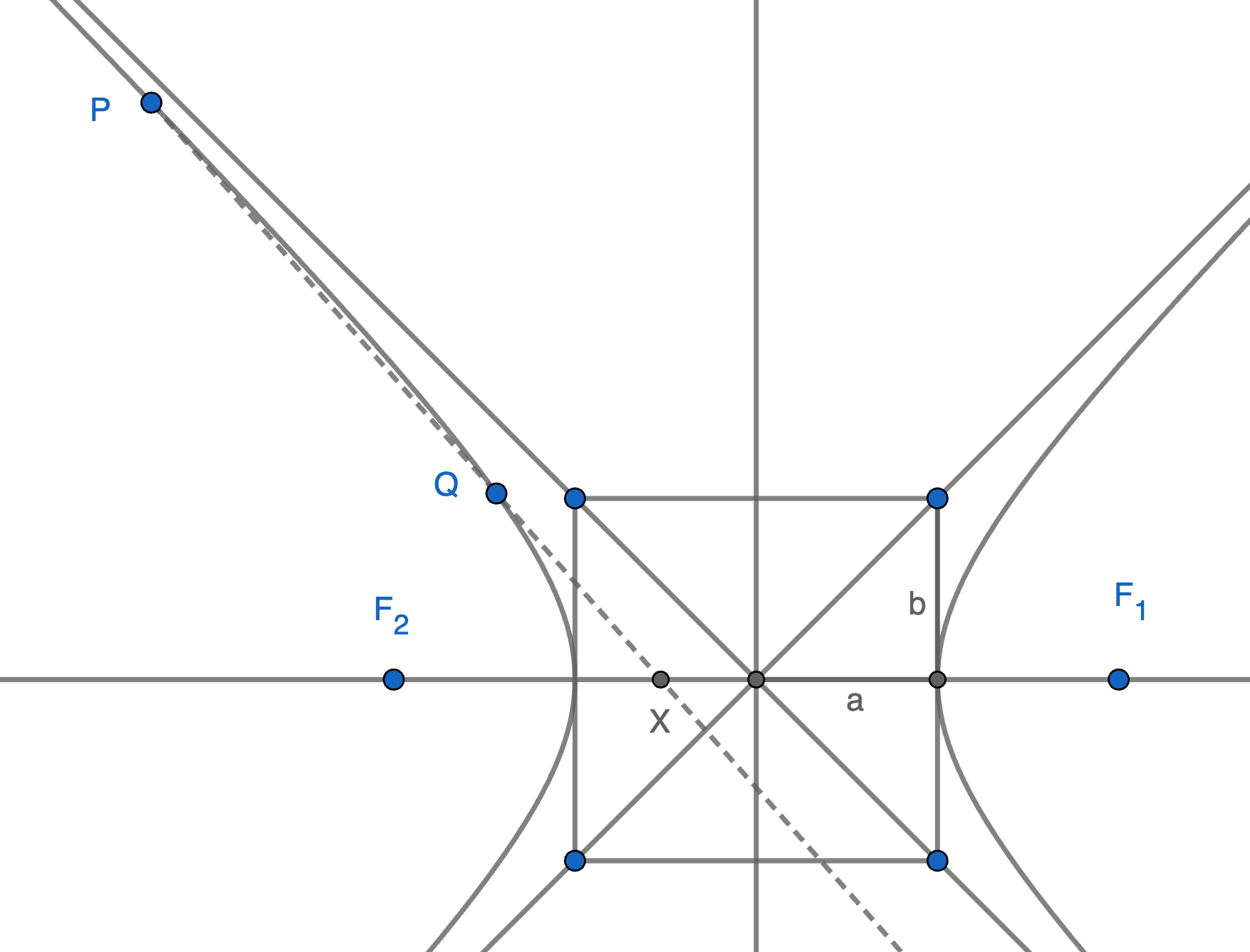}
\caption{The line through $P$ and $Q$ intersects the $x$-axis between $F_1$ and $F_2$}
\label{cross_pic}
\end{figure}

\end{proof}

\medskip

\begin{claim}\label{neighbours}
    In an integral graph, for any two distinct vertices $v_i,v_j$, the graph spaned on the vertices of their common neighbours, $N(v_i)\cap N(v_j)$, has a finite chromatic number.
\end{claim}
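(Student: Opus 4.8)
The plan is to localise the common neighbours onto finitely many curves and then colour each curve with its own palette. Write $A=v_i$ and $B=v_j$. Every common neighbour $X\in N(v_i)\cap N(v_j)$ has $\overline{AX},\overline{BX}\in\Z$, so $\big|\overline{AX}-\overline{BX}\big|$ is an integer in $\{0,1,\dots,\lfloor\overline{AB}\rfloor\}$. Hence $X$ lies on one of the finitely many confocal hyperbolas with foci $A,B$ determined by these values (the value $0$ giving the perpendicular bisector of $AB$, which carries at most two vertices since no three points are collinear). Splitting each hyperbola into its four arcs as in Claim~\ref{cross}, I obtain a partition of $N(v_i)\cap N(v_j)$ into a finite number $N\le 4\lfloor\overline{AB}\rfloor+O(1)$ of arcs, together with a bounded number of exceptional (degenerate) points. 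On each of these arcs the quantity $\overline{AX}$ varies strictly monotonically, so I may parametrise the points of a single arc by the integer $r=\overline{AX}$.

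The first reduction is that it suffices to bound the chromatic number of the subgraph induced on one arc. Indeed, if each arc induces a graph of chromatic number at most $t$, I colour the arcs one at a time using pairwise disjoint palettes; any edge joining two different arcs then automatically receives distinct colours, and the total number of colours used is at most $Nt$ plus the handful of exceptional points, a finite bound.

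The heart of the argument is to show that a single arc induces a graph with a finite vertex cover, which immediately gives finite chromatic number. Fix an arc and two of its points $P,Q$ with $r_P=\overline{AP}$ and $r_Q=\overline{AQ}$. Since $A,P,Q$ are three vertices of the graph they are not collinear, so the triangle inequality is strict: $\overline{PQ}>\big|\overline{AP}-\overline{AQ}\big|=|r_P-r_Q|$. I will complement this with a matching upper estimate: there is a finite $R_0$, depending only on the arc, such that $\overline{PQ}<|r_P-r_Q|+1$ whenever $r_P,r_Q\ge R_0$. Granting this, an edge of the arc between two points with $r\ge R_0$ would force the integer $\overline{PQ}$ to lie strictly between the consecutive integers $|r_P-r_Q|$ and $|r_P-r_Q|+1$, which is impossible. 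Hence every edge within the arc has an endpoint in the finite set $S=\{X:\overline{AX}<R_0\}$, so $S$ is a vertex cover and the arc has chromatic number at most $|S|+1$.

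The main obstacle is the upper estimate $\overline{PQ}<|r_P-r_Q|+1$ for points far out on the arc. Geometrically this reflects the fact that each arc is asymptotic to a straight line along which the focal distance $\overline{AX}$ grows at unit rate, so the excess $\overline{PQ}-|r_P-r_Q|$ is governed purely by curvature and is small. Concretely, placing the foci at $(\pm c,0)$ and writing the arc in focal coordinates, $\overline{PQ}^{2}-|r_P-r_Q|^{2}$ is a curvature term of order $|r_P-r_Q|^{2}/(r_Pr_Q)$ (with implied constant the squared semi-minor axis), which I expect to stay below $2|r_P-r_Q|+1$ once $\min(r_P,r_Q)$ exceeds a threshold determined by the arc; making this quantitative, uniformly over all pairs on the arc, including far-separated ones, is the one genuinely computational step. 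Once it is in place, summing the per-arc bounds through the disjoint-palette device yields a finite chromatic number for the whole common-neighbour graph.
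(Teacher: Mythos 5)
Your argument is correct, but its key lemma is genuinely different from the paper's. Both proofs start the same way: every common neighbour $X$ satisfies $|\overline{AX}-\overline{BX}|\in\{0,\dots,\lfloor\overline{AB}\rfloor\}$, so the common neighbourhood splits into finitely many quadrant-arcs of confocal hyperbolas, and it suffices to colour each arc with its own palette. From there the paper argues geometrically: by Claim~\ref{cross} the line through two same-arc points $P,Q$ crosses the focal segment, so the triangle $PQF_1$ has height at most $\overline{F_1F_2}$, while an integral triangle with all sides at least $a$ has height at least $\sqrt{a-1/4}$; hence any same-arc edge is either short ($\overline{PQ}<d=\overline{F_1F_2}^{\,2}+1$) or has an endpoint near a focus, and the short edges are handled by a bounded-degree colouring via De Bruijn--Erd\H{o}s, the near-focus points by a third finite colouring. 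You instead use the exact focal-radius formula $r=ex\pm a$ on an arc, so that $|r_P-r_Q|=e\,|x_P-x_Q|$, and squeeze the chord length strictly between consecutive integers. Your deferred quantitative step does go through: writing $g=\frac{x_P+x_Q}{\sqrt{x_P^2-a^2}+\sqrt{x_Q^2-a^2}}\geq 1$ one gets $\overline{PQ}^{\,2}-|r_P-r_Q|^2=\frac{b^2}{a^2}(x_P-x_Q)^2\left(g^2-1\right)$, and since $g^2-1=O\!\left(a^2/(x_Px_Q)\right)$ and $|x_P-x_Q|/(x_Px_Q)\leq 1/\min(x_P,x_Q)$, the excess $\overline{PQ}-|r_P-r_Q|$ is at most about $b^2/\min(r_P,r_Q)$, uniformly over all pairs on the arc. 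Strictness of the lower bound comes from the no-three-collinear hypothesis. The payoff is a strictly stronger per-arc conclusion than the paper's: all sufficiently far-out points of one arc form an independent set (even pairs at small mutual distance, since distinct integer radii force $|r_P-r_Q|\geq 1$), so each arc has a finite vertex cover and no appeal to De Bruijn--Erd\H{o}s or to the integral-triangle height bound is needed. The paper's route is softer and coordinate-free; yours is computational but structurally cleaner.
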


\begin{proof}
    Every integral triangle (a triangle with integer side lengths), $a\leq b\leq c,$ satisfies the inequality $a+b\geq c+1.$ A simple calculation shows that the triangle's height is at least $\sqrt{a-1/4}.$ 
    
    It follows that if $P$ and $Q$ are on the same arc and the distance $\overline{F_1F_2}\leq\sqrt{a-1/4}$ and 
    
    $$\min\left\{\overline{PQ},\overline{F_1Q},\overline{F_1P}\right\}\geq a,$$ then $PQF_1$ is not an integral triangle. Let $G$ be an integral graph with a realization in the plane. If $v_1,v_2\in V(G),$ let's denote the corresponding points in the plane by $F_1$ and $F_2.$ 
    We can assume that $F_1=(-c,0)$ and $F_2=(c,0)$ with some $c\in \mathbb{R}.$
    We will colour the vertices of 
    $N(v_i)\cap N(v_j)$ based on their point's positions relative to $F_1,F_2.$ We have three rounds of colouring. First, colour the points (vertices) according to their hyperbola arcs. We used at most $4\overline{F_1F_2}$ colours. Then, in every arc, we give different colours of the point pairs, which are at integer distances and closer than $$d=\overline{F_1F_2}^2+1.$$
    There are various ways to perform such colourings. Here, we show it can be done using $2d$ colours at most. This colouring is even independent of the graph. Let's define an infinite graph on a hyperbola arc. The vertices are the points of the hyperbola, and two points are connected if their distance is a positive integer, which is smaller than $d.$
    Every vertex has a degree at most $2d$ so we need at most $2d$ colours. (Here, we were using the De Bruin-Erd\H{o}s theorem about colouring infinite graphs \cite{EDB}). One could show that fewer colours are enough for a good colouring, but here, we only care about the finiteness of this number. In the third colouring, we give different colours to all the points which are at an integer distance from $F_1$ and $F_2$, and one of these distances is less than $d.$
    
    Combining the three colourings, we have a finite number of colours assigned to the vertices. It is a good colouring of the subgraph on $N(v_i)\cap N(v_j).$ Indeed, let's suppose there is an edge with the same colours in its vertices, $v_i,v_j.$ That would mean that the relevant points, $P$ and $Q,$ are on the same arc, forming an integral triangle with $F_1$ and $F_2,$ and side lengths at least $d.$ But it is impossible since any such triangle has a height larger than $\overline{F_1F_2}.$
    \end{proof}

Based on Claim \ref{neighbours}, we can construct a graph which can't be realized as an integer graph. Take an arbitrary graph, $G$, with an infinitely large chromatic number. Connect all vertices of $G$ with two additional vertices $v$ and $w.$ This graph has at least two vertices, $v$ and $w$ with infinite degree. In the next construction, every vertex has a finite degree.

\subsection{The second family of non-integral graphs}
The second graph has a ``spine'', a sequence of triangles $T_1, T_2, ..., T_i,\ldots $ where consecutive triangles are connected by all nine possible edges ($T_i$ and $T_{i+1}$ span a $K_6$).
Every triangle, $T_i$, is connected to $n_i$ vertices. These vertices, denoted by $P_i$, are not connected to anything else. They all have degree three. (Fig. \ref{treelike})

\medskip

\begin{figure}[h]

\centering
\includegraphics[width=0.6\textwidth]{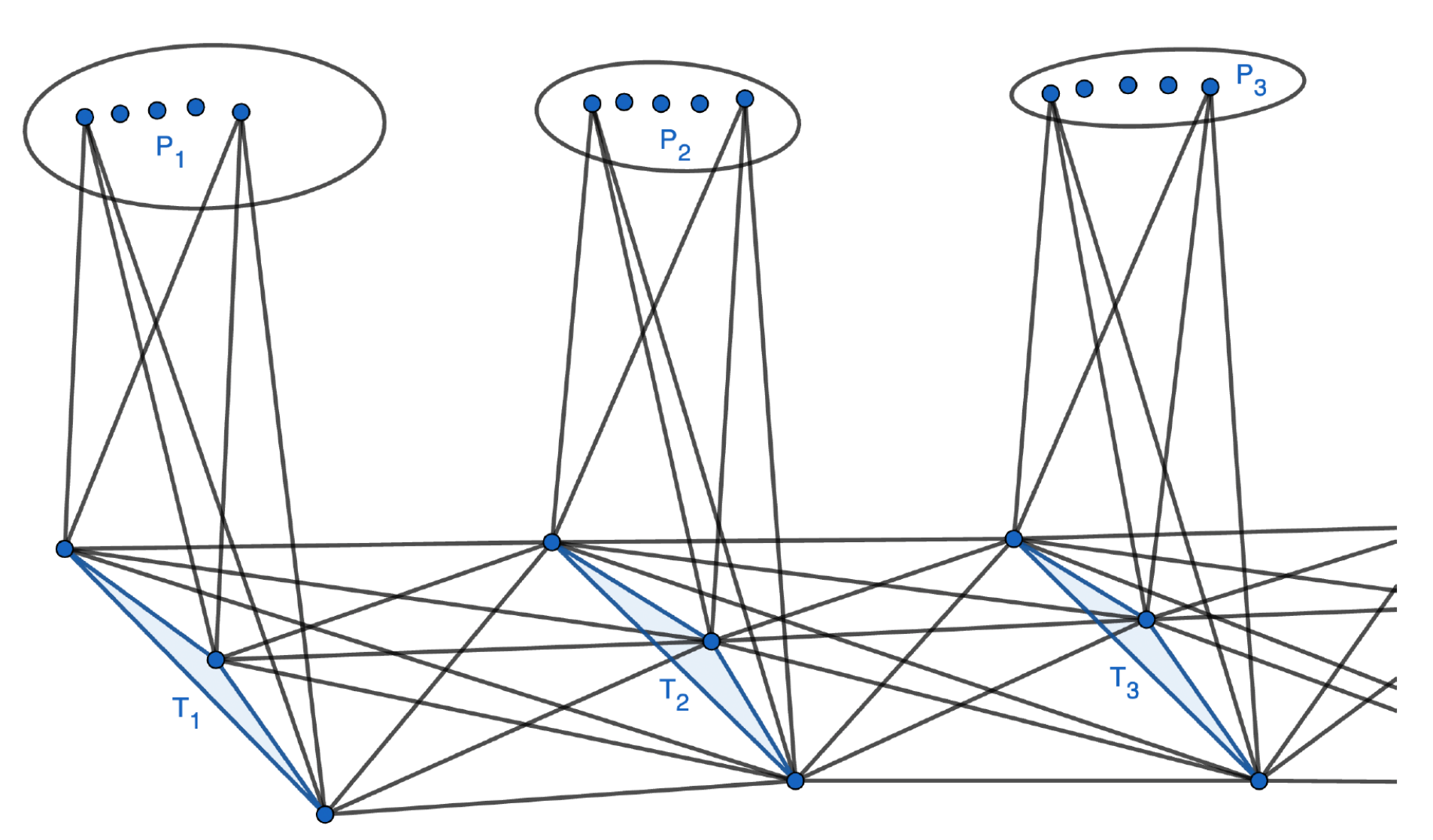 }
\caption{With a fast-growing $|P_i|$ sequence this graph is not integral}
\label{treelike}
\end{figure}

\medskip

By choosing the properly increasing $n_i$ values, we can guarantee that this graph can't be represented as an integral graph.

If, in a potential realization of the integer graph, the perimeter of $T_i$ is at most $p_i$, then it gives a bound on the max possible perimeter of $T_{i+1}$. We have $p_{i+1}<F(p_i)$ for some function $F$. Indeed, there are less than $p_i^2$ integral triangles with perimeter $p_i,$ and for each, the intersection points of the two sets of hyperbolas from Erd\H{o}s' proof are bounded. They all have a finite diameter, so any triangle spanned by these points has a bound on its perimeter in terms of $p_i.$ 

We also have a bound on $n_i$ for a given $p_i.$ It is given by the number of intersections of the hyperbolas given by two integer sides. It is less than $4p_i^2.$
Let $L(j,p)$ denote the bound on $n_j$ if the first triangle, $T_1$, had perimeter $p$. This function is well-defined for every $j$ and $p$ positive integer. Now, in our graph, we define $n_i:=L(i,i)+1$.
In any realization of the graph, $T_1$ has a perimeter, say $p$, so $n_p$ is larger than the possible bound given by $L(i,p)$.

This non-integral graph has simple graph properties: every degree is finite, and its chromatic number is six.

\medskip

\subsection{The third family of non-integral graphs}
This construction is an example of how to combine the previous two non-integer graphs to get a new type of graph. 

As in the second non-integer graph, we have an infinite set of triangles, $T_1, T_2, ..., T_i,\ldots $, where the consecutive triangles form a $K_6.$ 

From the first construction, we know that there is a function $H(d),$ such that if $\overline{PQ}\leq d,$ then in the graph spanned common neighbours of the vertices represented by $P$ and $Q$ span an integral graph $G'$ with $\chi(G')\leq H(d).$

\begin{figure}[h]

\centering
\includegraphics[width=0.6\textwidth]{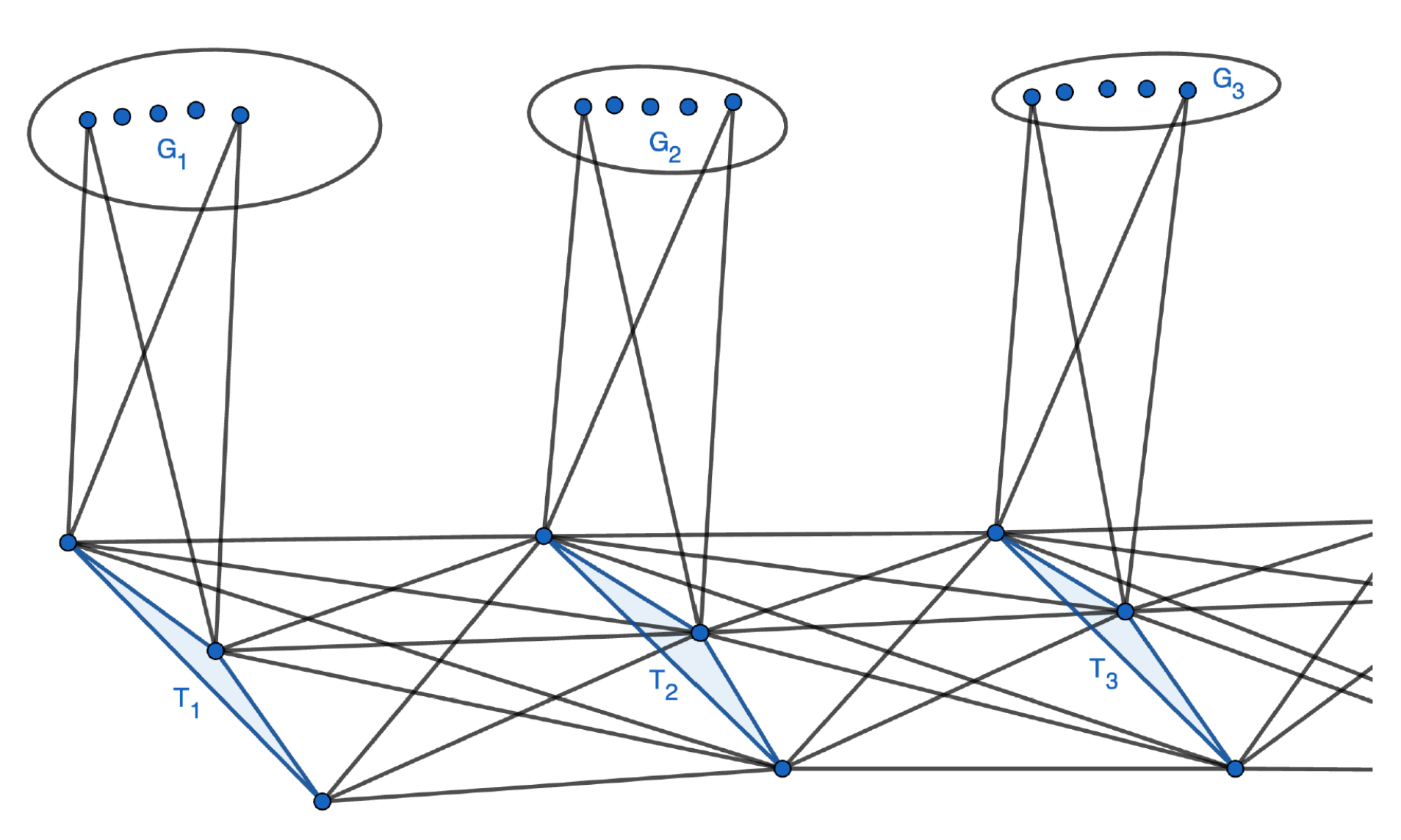 }
\caption{With a fast-growing $\chi(G_i)$ sequence this graph is not integral}
\end{figure}

\medskip

Similar to the earlier case, there is a function, $F^*$, such that if the first triangle, $T_1$, has a perimeter $p$, then the perimeter of $T_i$ is at most $F^*(p,i)$. Now, we choose a sequence of graphs with fast-growing chromatic numbers. Let's choose $G_i$ such that $\chi(G_i)\geq H(F^*(i,i)).$ For any starting $T_1$ tringle, there will be an index where $\chi(G_i)$ is larger than it would be allowed in an integral representation.

\section{Rational graphs}
A graph is a rational graph if it can be represented by points in the plane as vertices, and two points are connected by an edge if and only if their distance is a rational number. As for the integral graphs, no three points are allowed to be collinear.
While every finite graph is an integral graph, rational graphs are harder to construct. In \cite{MOT}, the construction of integral graphs consisted of points on a circle such that all pairwise distances were rational. The hyperbole method from the previous section won't work for rational graphs.
There is a very limited set of graphs known to be rational. 
Maehara proved that $K(\aleph_0,\aleph_0)$ is a rational graph.
A result of Berry \cite{Be} states that if a triangle has side lengths $a, b$ and $c$ such that $a^2,b^2$ and $c$ are all rational numbers, then the set of points which are at rational distances from all three vertices of the triangle are everywhere dense in the plane. Using this, one can construct families of rational graphs with some strong degree restrictions. See, e.g. in \cite{GGM} and \cite{VIB}. While only very special graphs are known to be rational graphs, no graph is known which doesn't have a rational representation.

A conjecture of Erd\H{o}s and Ulam states that there is no everywhere dense set of points in the plane such that all pairwise distances are rationals. Assuming the Bombieri-Lang conjecture, it was proved in \cite{Ta} and independently in \cite{Sh} that there are no such sets. It was also proved using the ABC conjecture in \cite{Pa}. Ascher, Braune, and Turchet proved an even stronger statement \cite{ABT}. They used the following definition for points in general position.

\begin{definition}
    We say that $n\geq 4$ points in the plane are in general position if no $n-4$ are on a line and no $n-3$ are on a circle.
\end{definition}

It was proved in \cite{SdZ} that if an algebraic curve contains infinitely many points of a rational point set (i.e. all pairwise distances are rationals), then all but four points are on a line, or all but three are on a circle. Moreover, such rational sets exist. In \cite{ABT}, they proved that assuming the Bombieri-Lang conjecture, there is a bound $N,$ such that if a rational set contains at least $N$ points, then all but at most four on a line, or all but at most three are on a circle. Based on this result, we can construct a simple non-rational graph (assuming the Bombieri-Lang conjecture).

In the definition of integral and rational graphs, collinear triples were forbidden in the realization of such graphs. So, a complete rational graph on $N+4$ vertices could only be represented as at least $N+1$ points lying on a circle, $C$. Let $G$ be a graph, which consists of a $K_N$ and two disjoint $K_4$-s, both connected to $K_n$ with all $4n$ edges, i.e. two $K_{N+4}$ with $N$ common vertices. By the assumption, at least one vertex from each $K_4$-s is on $C.$ Let's denote the two points by $P$ and $Q,$ and two other points on the circle from the $K_N$ part, $R$ and $S.$ All of the distances, $\overline{RS}, \overline{PR}$, $\overline{QR},\overline{SQ},\overline{PQ}$ are rational. But since the four points are on the same circle, $\overline{PQ}$ should also be rational by Ptolemy's theorem. (The product of the diagonals equals the sum of the products of the opposite sides)

That means $G$ is not a rational graph if we assume the Bombieri-Lang conjecture. 

\section{Open problems}
Many questions remain open. 
\begin{enumerate}
    \item Find a graph, finite or infinite, which is not a rational graph without assuming conjectures.
    \item Every finite graph is an integral graph, but what is the minimal diameter of a realization of a given graph? It was recently proved in \cite{GIP} that $K_n$ always has a large diameter.
    \item A nice conjecture was stated by Kemnitz and Harboth \cite{KH}, that there exists a plane integral drawing for every planar graph. See the proof for some planar graphs in \cite{GGM}
    \item What are the graphs which can be represented by odd distances? For references, check \cite{Da}.
    \item Representing integral graphs becomes a much harder problem if we forbid co-circular four-tuples in addition to collinear triples. It is open if $K_8$ has such representation. For such restricted representation of $K_7$, we refer to \cite{KK} and \cite{MTX}.
\end{enumerate}

Some of these problems and more can be found in the problem book by Brass, Moser and Pach \cite{BMP}.

\section{Acknowledgements}
This research was partially supported by the Simons Foundation and the Mathematisches Forschungsinstitut Oberwolfach and by an NSERC grant.

\end{document}